\documentclass[graybox]{svmult}

\usepackage{mathptmx}       
\usepackage{helvet}         
\usepackage{courier}        
\usepackage{type1cm}        
%
\usepackage{makeidx}         
\usepackage{graphicx}        
\usepackage{multicol}        
\usepackage[bottom]{footmisc}

\usepackage{amsmath}
\usepackage{amssymb}
\usepackage{amscd}
\usepackage{indentfirst}

\def\1{{\bf 1}}

\makeindex             


\begin{document}

\title*{Order estimates of best orthogonal trigonometric approximations of classes of infinitely differentiable functions}
\titlerunning{Order estimates of best orthogonal trigonometric approximations}
\author{Tetiana A. Stepanyuk}

\authorrunning{T.~A.~Stepanyuk }

  
  \institute{Institute of Analysis and Number
  Theory Kopernikusgasse 24/II 8010, Graz, Austria, Graz University of Technology 
  \at Institute of Mathematics of Ukrainian National Academy of Sciences, 3, Tereshchenkivska st., 01601, Kyiv-4, Ukraine \\ \email{tania$_{-}$stepaniuk@ukr.net} }

\maketitle

\abstract{
In this paper we establish exact order estimates for the best uniform  orthogonal trigonometric approximations of the classes of $2\pi$-periodic functions, whose $(\psi,\beta)$--derivatives belong to unit balls of spaces $L_{p}$, $1\leq p<\infty$, in the case, when the sequence $\psi(k)$ tends to zero faster, than any power function, but slower than geometric progression. Similar estimates are also established in the $L_{s}$-metric, $1<s\leq\infty$ for the classes of differentiable functions, which $(\psi,\beta)$--derivatives belong to unit ball of space $L_{1}$.
}


\section{Introduction}


Let $L_{p}$,
$1\leq p<\infty$, be the space of $2\pi$--periodic functions $f$ summable to the power $p$
on  $[0,2\pi)$, with the norm
$\|f\|_{p}=\Big(\int\limits_{0}^{2\pi}|f(t)|^{p}dt\Big)^{\frac{1}{p}}$; $L_{\infty}$ be the space of $2\pi$--periodic functions  $f$, which are Lebesque measurable and essentially bounded    with the norm
$\|f\|_{\infty}=\mathop{\rm{ess}\sup}\limits_{t}|f(t)|$.

 Let $f:\mathbb{R}\rightarrow\mathbb{R}$ be the function from  $L_{1}$, whose Fourier series is given by
\begin{align*}
\sum_{k=-\infty}^{\infty}\hat{f}(k)e^{ikx},
\end{align*}
where  $\hat{f}(k)=\frac{1}{2\pi}\int\limits_{-\pi}^{\pi}f(t)e^{-ikt}dt$ are the Fourier coefficients of the function  $f$,
 $\psi(k)$ is an arbitrary fixed sequence of real numbers and $\beta$
is a fixed real number. Then, if the series
\begin{align*}
\sum_{k\in \mathbb{Z}\setminus\{0\}}\frac{\hat{f}(k)}{\psi(|k|)}e^{i(kx+\frac{\beta\pi}{2} \mathrm{sign} k)}
\end{align*}
\noindent is the Fourier series of some function $\varphi$ from $L_{1}$, then this function is called the $(\psi,\beta)$--derivative of the function $f$  and is denoted by
$f_{\beta}^{\psi}$.
A set of functions $f$, whose $(\psi,\beta)$--derivatives exist, is denoted by
 $L_{\beta}^{\psi}$  (see \cite{Stepanets1}).
 
Let
$$
B_{p}^{0}:=\left\{\varphi\in L_{p}: \ ||\varphi||_{p}\leq 1,
\ \varphi\perp1\right\}, \ \ 1\leq p\leq\infty.
$$
If $f\in L^{\psi}_{\beta}$, and, at the same time $f^{\psi}_{\beta}\in
B_{p}^{0}$, then we say that the function $f$ belongs to the class
  $L^{\psi}_{\beta,p}$.
 

By ${\mathfrak M}$ we denote the set of all convex (downward) continuous functions $\psi(t)$, $t\geq 1,$ such that $\lim\limits_{t\rightarrow\infty}\psi(t)=0$. Assume that the sequence $\psi (k),\ k\in \mathbb{N}$, specifying the class $L^{\psi}_{\beta,p}, \ 1\leq p\leq \infty$,  is the restriction of the functions  $\psi(t)$ from ${\mathfrak M}$
to the set of natural numbers.
 
 Following Stepanets (see, e.g., \cite{Stepanets1}), by using the characteristic  $\mu(\psi;t)$ of functions $\psi$ from $\in{\mathfrak M}$ of the form
\begin{equation}\label{mu}
\mu(t)=\mu(\psi;t):=\frac{t}{\eta(t)-t},
\end{equation}
where $\eta(t)=\eta(\psi;t):=\psi^{-1}\left(\psi(t)/2\right)$, $\psi^{-1}$ is the function inverse to $\psi$, we select the following subsets of the set   ${\mathfrak M}$:
$$
\mathfrak{M}^{+}_{\infty}=\left\{\psi\in \mathfrak{M}: \ \ \
\mu(\psi;t)\uparrow\infty \right\}.
$$
$$
\mathfrak{M}^{''}_{\infty}=\left\{\psi\in \mathfrak{M}^{+}_{\infty}: \ \ \ \exists K>0 \ \ 
\eta(\psi;t)-t\geq K \ \ \ t\geq 1 \right\}.
$$

 
The functions $\psi_{r,\alpha}(t)=\exp(-\alpha t^{r})$ are typical representatives of the set 
$\mathfrak{M}^{+}_{\infty}$. Moreover, if $r\in(0,1]$, 
then $\psi_{r,\alpha}\in \mathfrak{M}^{''}_{\infty}$. The classes $L^{\psi}_{\beta,p}$, generated by the functions $\psi=\psi_{r,\alpha}$ are denoted by $L^{\alpha,r}_{\beta,p}$.
 
 For functions  $f$ from classes $L^{\psi}_{\beta,p}$ we consider: $L_{s}$--norms of deviations of the functions  $f$ from their partial Fourier sums of order $n-1$, i.e., the quantities
\begin{equation}\label{rho}
\|\rho_{n}(f;\cdot)\|_{s}=
\|f(\cdot)-S_{n-1}(f;\cdot)\|_{s}, \ \ 1\leq s\leq\infty,
\end{equation}
where
$$
S_{n-1}(f;x)=\sum\limits_{k=-n+1}^{n-1}\hat{f}(k)e^{ikx};
$$
and the best orthogonal trigonometric approximations of the functions $f$ in metric of space  $L_{s}$, i.e., the quantities of the form
\begin{equation}\label{nn_term}
 e^{\bot}_{m}(f)_{s}=
  \inf\limits_{\gamma_{m}}\|f(\cdot)-S_{\gamma_{m}}(f;\cdot) \|_{s},  \ \  \ 1\leq s\leq\infty,
\end{equation}
where $\gamma_{m}$, $m\in\mathbb{N}$, is an arbitrary collection of  $m$ integer numbers, and
$$
S_{\gamma_{m}}(f;x)=\sum\limits_{k\in \gamma_{m}}\hat{f}(k)e^{ikx}.
$$

We set
\begin{equation}\label{Fsum}
 {\cal E}_{n}(L^{\psi}_{\beta,p})_{s}=
\sup\limits_{f\in
L^{\psi}_{\beta,p}}\|\rho_{n}(f;\cdot)\|_{s}, \ \ 1\leq p,s\leq\infty,
\end{equation}
\begin{equation}\label{n_term}
  e^{\bot}_{n}(L_{\beta,p}^{\psi})_{s}=\sup\limits_{f\in L_{\beta,p}^{\psi}}e^{\bot}_{n}(f)_{s}, \ 1\leq p,s\leq\infty.
\end{equation}

The following inequalities follow from given above definitions  (\ref{Fsum}) and (\ref{n_term})
\begin{equation}\label{ineq_comp}
  e^{\bot}_{2n}(L_{\beta,p}^{\psi})_{s}\leq 
  e^{\bot}_{2n-1}(L_{\beta,p}^{\psi})_{s}\leq {\cal E}_{n}(L^{\psi}_{\beta,p})_{s}, \ 1\leq p,s\leq\infty.
\end{equation}

In the case when $\psi(k)=k^{-r}$, $r>0$, the classes $L^{\psi}_{\beta,p}$, 
$1\leq p\leq\infty$, $\beta\in \mathbb{R}$ are well-known Weyl--Nagy classes $W^{r}_{\beta,p}$.
For these classes, the order estimates of quantities $ e^{\bot}_{n}(L_{\beta,p}^{\psi})_{s}$ are known for $1<p,s<\infty$ (see \cite{Romanyuk2002}, \cite{Romanyuk2007}), for $1\leq p<\infty$, $s=\infty$, $r>\frac{1}{p}$ and also for $p=1$, $1<s<\infty$, $r>\frac{1}{s'}$, $\frac{1}{s}+\frac{1}{s'}=1$ (see \cite{Romanyuk2007}, \cite{Romanyuk2012}).

In the case, when $\psi(k)$ tends to zero not faster than some power function, order estimates for quantities (\ref{n_term}) were established in \cite{Fedorenko1999}, \cite{S_S2015}, \cite{Shkapa2014_no2} and \cite{Shkapa2014_no3}. In the case, when $\psi(k)$ tends to zero not slower than geometric progression, exact order estimates for $ e^{\bot}_{n}(L_{\beta,p}^{\psi})_{s}$ were found in \cite{S_S_Dopovidi2015} for all $1\leq p,s\leq\infty$.

Our aim is to establish the exact-order estimates of $e^{\bot}_{n}(L_{\beta,p}^{\psi})_{\infty}$, $1\leq p<\infty$, and $e^{\bot}_{n}(L_{\beta,1}^{\psi})_{s}$, $1<s<\infty$, in the case, when $\psi$ decreases faster than any power function, but slower than geometric progression ($\psi\in\mathfrak{M}^{''}_{\infty}$).

\section{Best orthogonal trigonometric approximations of the classes $L^{\psi}_{\beta,p}$, $1<p<\infty$, in the uniform metric}

We write
 $a_{n}\asymp b_{n}$ to mean that there exist positive constants $C_{1}$ and
 $C_{2}$ independent of $n$ such that $C_{1}a_{n}\leq b_{n}\leq C_{2}a_{n}$ for
 all $n$.

\begin{theorem}\label{theoremUniform} 
Let ${1< p<\infty}$, $\psi\in\mathfrak{M}^{''}_{\infty}$
and the function $\frac{\psi(t)}{|\psi'(t)|}\uparrow\infty$  as  $t\rightarrow\infty$. Then, for all  $\beta\in \mathbb{R}$  the following order estimates hold
 \begin{equation}\label{theorem_1}
 e^{\bot}_{2n-1}(L_{\beta,p}^{\psi})_{\infty}\asymp e^{\bot}_{2n}(L_{\beta,p}^{\psi})_{\infty}\asymp \psi(n)(\eta(n)-n)^{\frac{1}{p}}.
\end{equation}
\end{theorem}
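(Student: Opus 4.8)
The plan is to reduce both bounds to estimates for the $L_{p'}$-norm ($\frac1p+\frac1{p'}=1$) of the tail of the $\psi$-kernel, and then to analyse that norm through the halving structure encoded in $\psi\in\mathfrak{M}''_{\infty}$. Writing $f=\varphi*\Psi_{\beta}$, where $\varphi=f^{\psi}_{\beta}\in B^{0}_{p}$ and
$$\Psi_{\beta}(x)=\sum_{k\neq0}\psi(|k|)e^{i\left(kx-\frac{\beta\pi}{2}\mathrm{sign}\,k\right)},$$
every deviation in \eqref{rho} and every truncation in \eqref{nn_term} becomes a convolution of $\varphi$ with the kernel obtained from $\Psi_{\beta}$ by deleting the retained frequencies. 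In particular $f-S_{n-1}(f)=\varphi*\Psi_{\beta,n}$ with
$$\Psi_{\beta,n}(x)=2\sum_{k\geq n}\psi(k)\cos\Big(kx-\frac{\beta\pi}{2}\Big),$$
so by \eqref{ineq_comp} it suffices for the upper bound to estimate ${\cal E}_{n}(L^{\psi}_{\beta,p})_{\infty}=\sup_{\varphi\in B^{0}_{p}}\|\varphi*\Psi_{\beta,n}\|_{\infty}$.

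For the upper bound I would first apply H\"older's inequality to the convolution: for each $x$, $|(\varphi*\Psi_{\beta,n})(x)|\leq\|\varphi\|_{p}\,\|\Psi_{\beta,n}\|_{p'}\leq\|\Psi_{\beta,n}\|_{p'}$, hence ${\cal E}_{n}(L^{\psi}_{\beta,p})_{\infty}\leq\|\Psi_{\beta,n}\|_{p'}$. The core analytic step is then to prove $\|\Psi_{\beta,n}\|_{p'}\asymp\psi(n)(\eta(n)-n)^{1/p}$. I would obtain this by Abel summation, splitting $[0,\pi]$ at the point $x\asymp(\eta(n)-n)^{-1}$. On $[0,(\eta(n)-n)^{-1}]$ the first halving block of frequencies $n\leq k\lesssim\eta(n)$ carries coefficients all comparable to $\psi(n)$ — this is exactly where $\psi\in\mathfrak{M}''_{\infty}$ and $\frac{\psi}{|\psi'|}\uparrow\infty$ enter, guaranteeing that $\psi$ varies by at most a constant factor across a block of width $\eta(n)-n$ and that the blocks beyond $\eta(n)$ form a geometrically decaying, summable contribution — so the kernel is $\asymp\psi(n)(\eta(n)-n)$ there and contributes $\asymp\big(\psi(n)(\eta(n)-n)\big)^{p'}(\eta(n)-n)^{-1}$ to $\|\Psi_{\beta,n}\|_{p'}^{p'}$. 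On the complementary interval the Dirichlet-type decay $\sim 1/x$ produces a matching contribution, and together they give $\|\Psi_{\beta,n}\|_{p'}^{p'}\asymp\psi(n)^{p'}(\eta(n)-n)^{p'-1}$, i.e. the claimed order, since $\frac{p'-1}{p'}=\frac1p$.

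For the lower bound I would exhibit a single $\varphi^{*}\in B^{0}_{p}$, normalised so that $\|\varphi^{*}\|_{p}=1$ and built from a modulated Dirichlet (de la Vall\'ee Poussin) kernel whose spectrum is concentrated on the block $n\leq|k|\lesssim\eta(n)$, and show that $\inf_{\gamma_{2n}}\|\varphi^{*}*G_{\gamma_{2n}}\|_{\infty}\gtrsim\psi(n)(\eta(n)-n)^{1/p}$, where $G_{\gamma_{2n}}$ is $\Psi_{\beta}$ with the frequencies of $\gamma_{2n}$ removed. The guiding principle is a budget argument: since $\eta(n)-n=o(n)$ for $\psi\in\mathfrak{M}''_{\infty}$, an admissible set $\gamma_{2n}$ of only $2n$ frequencies is essentially forced to consist of the low frequencies $\{|k|<n\}$, because these carry the largest factors $\psi(|k|)\geq\psi(n)$; trading any low frequency for a high one (to shorten the surviving block) replaces a deleted block of amplitude $\asymp\psi(n)$ and width $\asymp\eta(n)-n$ by an adjacent block of the same order, leaving the residual $L_{\infty}$-norm unchanged up to a constant. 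I would make this precise by testing $\varphi^{*}*G_{\gamma_{2n}}$ against a fixed bounded functional adapted to $\varphi^{*}$, so that the pairing equals a sum of nonnegative block contributions from which at most $2n$ terms have been removed.

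The main obstacle is precisely this last step: controlling the infimum over all admissible spectra $\gamma_{2n}$ simultaneously, rather than the single Fourier choice. The inequality $e^{\bot}_{2n}\leq{\cal E}_{n}$ from \eqref{ineq_comp} is only a minimax upper bound, and the difficulty is to establish the reverse estimate $\sup_{\varphi}\inf_{\gamma_{2n}}\gtrsim\psi(n)(\eta(n)-n)^{1/p}$ with a fixed $\varphi^{*}$. I expect to handle it by reducing an arbitrary $\gamma_{2n}$ to a canonical lowest-frequency configuration through a rearrangement that can only decrease the residual, using the monotonicity of $\psi$ and the comparability of neighbouring halving blocks guaranteed by $\psi\in\mathfrak{M}''_{\infty}$ together with $\frac{\psi}{|\psi'|}\uparrow\infty$; thus the same two hypotheses that drive the kernel estimate for the upper bound also close the lower bound.
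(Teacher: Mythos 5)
Your upper bound is fine and matches the paper in substance: the paper obtains it by citing Theorem~1 of \cite{S_S} together with the chain (\ref{ineq_comp}), and your H\"older-plus-kernel-norm sketch (estimating $\|\Psi_{\beta,n}\|_{p'}$ by splitting at $|t|\asymp(\eta(n)-n)^{-1}$) is the standard route to that cited estimate. The genuine gap is in your lower bound, and it is fatal as written. You propose a test function $f^{*}=\varphi^{*}\ast\Psi_{\beta}$ with $\varphi^{*}$ a modulated de la Vall\'ee Poussin kernel whose spectrum lies in the block $n\leq|k|\lesssim\eta(n)$. Convolution with $\Psi_{\beta}$ cannot enlarge the spectrum, and since $\psi\in\mathfrak{M}''_{\infty}\subset\mathfrak{M}^{+}_{\infty}$ forces $\mu(n)=n/(\eta(n)-n)\uparrow\infty$, that block contains only $O(\eta(n)-n)=o(n)$ integers. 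Hence for all large $n$ the spectrum of $f^{*}$ has fewer than $2n$ elements, and a competitor may simply take $\gamma_{2n}\supseteq\{k:\hat{f^{*}}(k)\neq0\}$, giving $S_{\gamma_{2n}}(f^{*};\cdot)=f^{*}$ and $e^{\bot}_{2n}(f^{*})_{\infty}=0$. No rearrangement or ``budget'' argument can repair this: the infimum in (\ref{nn_term}) sees only the frequencies actually present in the test function, not the coefficients $\psi(|k|)$ of the kernel, so your claim that $\gamma_{2n}$ is essentially forced to consist of the low frequencies $\{|k|<n\}$ is reasoning about $\Psi_{\beta}$ rather than about $f^{*}$; the optimal $\gamma_{2n}$ swallows $f^{*}$ whole.

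The missing idea --- the heart of the paper's proof --- is that the test function must carry strictly more than $2n$ frequencies, so that every admissible $\gamma_{2n}$ leaves a quantifiable remainder. The paper's $f^{*}_{p,n}$ in (\ref{function}) is a polynomial of degree $2n$ ($4n+1$ spectral points) with positive coefficients $\psi(|k|)\psi(2n-|k|)$ for $|k|\leq n-1$ and $\psi^{2}(|k|)$ for $n\leq|k|\leq 2n$, normalized by $\lambda_{p}\psi(n)^{-1}(\eta(n)-n)^{-1/p'}$. The hypothesis $\psi(t)/|\psi'(t)|\uparrow\infty$, which you invoked for block comparability, is used there for a different and sharper purpose: it shows $t\mapsto\psi(t)\psi(2n-t)$ decreases on $[1,n]$, so the whole coefficient sequence decreases in $|k|$; consequently the worst $\gamma_{2n}$ deletes the $2n$ lowest frequencies, and what survives dominates $\sum_{n\leq|k|\leq 2n}\psi^{2}(|k|)\geq\int_{n}^{\eta(n)}\psi^{2}(t)\,dt\geq\psi^{2}(\eta(n))(\eta(n)-n)=\frac{1}{4}\psi^{2}(n)(\eta(n)-n)$. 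Pairing the residual with $V_{2n}$ (via Proposition A1.1 of \cite{Korn} and $\|V_{2n}\|_{1}\leq 3\pi$) converts this surviving sum into $e^{\bot}_{2n}(f^{*}_{p,n})_{\infty}\geq\frac{\lambda_{p}}{24}\psi(n)(\eta(n)-n)^{1/p}$, which is your claimed order. Moreover, the ``reflected product'' form of the low coefficients is exactly what makes the membership $f^{*}_{p,n}\in L^{\psi}_{\beta,p}$ verifiable: the $(\psi,\beta)$-derivative then has coefficients $\psi(2n-|k|)$, resp.\ $\psi(|k|)$, and Abel summation gives the two pointwise bounds whose $p$-th power integrates to $O(1)$. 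Your dual formulation (fixing $\varphi^{*}\in B^{0}_{p}$) could be made to work, but only with a $\varphi^{*}$ spread over more than $2n$ frequencies --- at which point you would be reconstructing precisely this example.
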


\begin{proof}
According to Theorem 1 from   \cite{S_S} under conditions $\psi\in\mathfrak{M}^{+}_{\infty}$,
$\beta\in \mathbb{R}$, ${1\leq p<\infty}$, for  $n\in
 \mathbb{N}$, such that ${\eta(n)-n\geq a>2}, \ {\mu(n)\geq b>2}$ the following estimate is true
\begin{equation}\label{th1}
  {\cal E}_{n}(L^{\psi}_{\beta,p})_{\infty}
\leq
K_{a,b} \ (2p)^{1-\frac{1}{p}} \psi(n)(\eta(n)-n)^{\frac{1}{p}},
\end{equation}
where
\begin{align*}
K_{a,b}=
\frac{1}{\pi}\max\left\{\frac{2b}{b-2}+\frac{1}{a}, \
2\pi\right\}.
\end{align*}

Using inequalities (\ref{ineq_comp}) and (\ref{th1}), we obtain
\begin{equation}\label{a1}
e^{\bot}_{2n}(L_{\beta,p}^{\psi})_{\infty}\leq e^{\bot}_{2n-1}(L_{\beta,p}^{\psi})_{\infty}\leq
K_{a,b} \ (2p)^{1-\frac{1}{p}} \psi(n)(\eta(n)-n)^{\frac{1}{p}}.
\end{equation}
Let us find the lower estimate for the quantity $e^{\bot}_{2n}(L_{\beta,p}^{\psi})_{\infty}$.
With this purpose we construct the function
\begin{align*}
 f^{*}_{p,n}(t)= f^{*}_{p,n}(\psi;t):=
\frac{\lambda_{p}}{\psi(n)(\eta(n)-n)^{\frac{1}{p'}}}\bigg(\frac{1}{2}\psi(1)\psi(2n)+
\end{align*}
\begin{equation}\label{function}
 +\sum\limits_{k=1}^{n-1}\psi(k)\psi(2n-k)\cos kt
+\sum\limits_{k=n}^{2n}\psi^{2}(k)\cos kt\bigg), \ \ \frac{1}{p}+\frac{1}{p'}=1.
\end{equation}

Let us show that $f^{*}_{p,n}\in L_{\beta,p}^{\psi}$. The definition of $(\psi,\beta)$--derivative yields
\begin{multline}\label{a2}
 (f^{*}_{p,n}(t))^{\psi}_{\beta}=
\frac{\lambda_{p}}{\psi(n)(\eta(n)-n)^{\frac{1}{p'}}}\bigg(\sum\limits_{k=1}^{n-1}\psi(2n-k)\cos \Big(kt+\frac{\beta\pi}{2}\Big)
 \\
 +\sum\limits_{k=n}^{2n}\psi(k)\cos \Big(kt+\frac{\beta\pi}{2}\Big)\bigg).
\end{multline}
Obviously
\begin{align*}
\big|(f^{*}_{p,n}(t))^{\psi}_{\beta}\big|\leq
\frac{\lambda_{p}}{\psi(n)(\eta(n)-n)^{\frac{1}{p'}}}\bigg(\sum\limits_{k=1}^{n-1}\psi(2n-k)+ \sum\limits_{k=n}^{2n}\psi(k)\bigg)<
\end{align*}
\begin{equation}\label{a3}
<\frac{2\lambda_{p}}{\psi(n)(\eta(n)-n)^{\frac{1}{p'}}}\sum\limits_{k=n}^{2n}\psi(k)
\leq\frac{2\lambda_{p}}{\psi(n)(\eta(n)-n)^{\frac{1}{p'}}}\bigg(\psi(n)+\int\limits_{n}^{\infty}\psi(u)du\bigg).
\end{equation}

To estimate the integral from the right part of formula
(\ref{a3}),
we use the following statement  \cite[p. 500]{Serdyuk2004}.

\begin{proposition}\label{statement1} If 
 $\psi\in\mathfrak{M}^{+}_{\infty}$, then for arbitrary ${m\in\mathbb{N}}$,
 such that $\mu(\psi,m)>2$
 the following condition holds
 \begin{equation}\label{prop1}
\int\limits_{m}^{\infty}\psi(u)du\leq
\frac{2}{1-\frac{2}{\mu(m)}}\psi(m)(\eta(m)-m).
\end{equation}
\end{proposition}

Formulas (\ref{a3}) and (\ref{prop1}) imply that
\begin{align*}
\big|(f^{*}_{p,n}(t))^{\psi}_{\beta}\big|\leq
\frac{2\lambda_{p}}{\psi(n)(\eta(n)-n)^{\frac{1}{p'}}}\bigg(\psi(n)+\frac{2b}{b-2}\psi(n)(\eta(n)-n)\bigg)<
\end{align*}
\begin{equation}\label{a7}
 <\frac{5\lambda_{p} b}{b-2}(\eta(n)-n)^{\frac{1}{p}}.
\end{equation}

We denote
\begin{equation}\label{Dkb}
D_{k,\beta}(t):=\frac{1}{2}\cos\frac{\beta\pi}{2}+\sum\limits_{j=1}^{k}\cos\Big(jt+\frac{\beta\pi}{2}\Big).
\end{equation}

Applying Abel transform, we have 
\begin{multline}\label{a4}
\sum\limits_{k=1}^{n-1}\psi(2n-k)\cos \Big(kt+\frac{\beta\pi}{2}\Big)=\sum\limits_{k=1}^{n-2}(\psi(2n-k+1)-\psi(2n-k))D_{k,\beta}(t)
\\
  +\psi(n+1)D_{n-1,\beta}(t)-\psi(2n-1)\frac{1}{2}\cos \frac{\beta\pi}{2}
\end{multline}
and
\begin{multline}\label{a5}
\sum\limits_{k=n}^{2n}\psi(k)\cos \Big(kt+\frac{\beta\pi}{2}\Big)=\sum\limits_{k=n}^{2n-1}(\psi(k)-\psi(k+1))D_{k,\beta}(t)
\\
  +\psi(2n)D_{2n,\beta}(t)-\psi(n)D_{n-1,\beta}(t).
\end{multline}

Since
\begin{equation}\label{grad}
\sum\limits_{k=0}^{N-1}\sin(\gamma+kt)=\sin\Big(\gamma+\frac{N-1}{2}t\Big)\sin
\frac{Ny}{2}\frac{1}{\sin\frac{t}{2}}
\end{equation}
 (see, e.g., \cite[p.43]{Gradshteyn}), for $N=k+1$, $\gamma=(\beta-1)\frac{\pi}{2}$, the following inequality holds
\begin{align}
& |D_{k,\beta}(t)|
=
\left| \frac{\cos\big(\frac{kt}{2}+
\frac{\beta\pi}{2}\big)\sin\frac{k+1}{2}t}{\sin\frac{t}{2}}-\frac{1}{2}\cos\frac{\beta\pi}{2}\right| \notag
\\
& =\left|\frac{\sin\big((k+\frac{1}{2})t+
\frac{\beta\pi}{2}\big)-\cos\frac{t}{2}
\sin\frac{\beta\pi}{2}}{2\sin\frac{t}{2}}\right| \leq \frac{\pi}{t}, \ \ \ 0<|t|\leq \pi.\label{ner}
\end{align}

According to (\ref{a2}), (\ref{a4}), (\ref{a5}) and (\ref{ner}), we obtain
\begin{align}
& \big|(f^{*}_{p,n}(t))^{\psi}_{\beta}\big|\leq
\frac{\lambda_{p}}{\psi(n)(\eta(n)-n)^{\frac{1}{p'}}}\frac{\pi}{|t|}\bigg(\sum\limits_{k=1}^{n-2}|\psi(2n-k)-\psi(2n-k-1)|+\psi(n+1) \notag
\\
&+\psi(2n-1)+\sum\limits_{k=n}^{2n-1}|\psi(k)-\psi(k+1)|+\psi(2n)+\psi(n)\bigg) \notag
\\
&  =
\frac{\lambda_{p}}{\psi(n)(\eta(n)-n)^{\frac{1}{p'}}}\frac{2\pi}{|t|}(\psi(n+1)+\psi(n))\leq\frac{4\pi\lambda_{p}}{(\eta(n)-n)^{\frac{1}{p'}}}\frac{1}{|t|}.\label{a6}
\end{align}

So,  (\ref{a7}) and (\ref{a6}) imply
\begin{align*}
 & \big\|(f^{*}_{p,n}(t))^{\psi}_{\beta}\big\|_{p} \notag
\\
& \leq
\lambda_{p}\max\Big\{\frac{5b}{b-2}, \ 4\pi\Big\}\bigg( \int\limits_{|t|\leq\frac{1}{\eta(n)-n}
}(\eta(n)-n)dt+\frac{1}{(\eta(n)-n)^{\frac{p}{p'}}}\int\limits_{\frac{1}{\eta(n)-n}\leq|t|\leq\pi}\frac{dt}{|t|^{p}}\bigg)^{\frac{1}{p}} \notag
\\
& \leq
2\lambda_{p}\max\Big\{\frac{5b}{b-2}, \ 4\pi\Big\}\Big( 1+\frac{1}{p-1}\Big)^{\frac{1}{p}}=2\lambda_{p}\max\Big\{\frac{5b}{b-2}, \ 4\pi\Big\}(p')^{\frac{1}{p}}. \notag
\end{align*}
Hence, for
\begin{align*}
\lambda_{p} =\frac{1}{2(p')^{\frac{1}{p}}\max\Big\{\frac{5b}{b-2}, \ 4\pi\Big\}}
\end{align*}
the embedding $f^{*}_{p,n}\in L^{\psi}_{\beta,p}$ is true.

Let us consider the quantity
\begin{equation}\label{eq3}
  I_{1}:=\inf\limits_{\gamma_{2n}}\bigg|\int\limits_{-\pi}^{\pi}(f^{*}_{p,n}(t)-S_{\gamma_{2n}}(f^{*}_{p,n};t))V_{2n}(t)dt\bigg|,
\end{equation}
where  $V_{2n}$ are de la $\mathrm{Vall\acute{e}e}$-Poisson kernels of the form
\begin{equation}\label{val_pus2}
  V_{m}(t):=\frac{1}{2}+\sum\limits_{k=1}^{m}\cos kt+2\sum\limits_{k=m+1}^{2m-1}\Big(1-\frac{k}{2m}\Big)\cos
kt, \ m\in
\mathbb{N}.
\end{equation}

 Proposition A1.1 from  \cite{Korn} implies
\begin{equation}\label{a9}
 I_{1}\leq\inf\limits_{\gamma_{2n}}\|f^{*}_{p}(t)-S_{\gamma_{2n}}(f^{*}_{p,n};t)\|_{\infty}\|V_{2n}\|_{1}=
 e^{\bot}_{2n}(f^{*}_{p,n})_{\infty}\|V_{2n}\|_{1}.
\end{equation}
Since (see, e.g., \cite[p.247]{Stepaniuk2014})
\begin{equation}\label{eq61}
 \|V_{m}\|_{1}\leq3\pi, \ \ m\in\mathbb{N},
\end{equation}
 from (\ref{a9}) and (\ref{eq61}) we can write down the estimate
\begin{equation}\label{eq9}
 e^{\bot}_{2n}(f^{*}_{p,n})_{\infty} \geq \frac{1}{3\pi}I_{1}.
\end{equation}

Notice, that
\begin{align}
& f^{*}_{p,n}(t)-S_{\gamma_{2n}}(f^{*}_{p,n};t) \notag
 \\
&=
 \frac{\lambda_{p}}{2\psi(n)(\eta(n)-n)^{\frac{1}{p'}}}
 \bigg(
{\mathop{\sum}\limits_{
 |k|\leq n-1,\atop k \notin\gamma_{2n} }}
\psi(|k|)\psi(2n-|k|)e^{ikt}+{\mathop{\sum}\limits_{
 n\leq|k|\leq 2n,\atop k \notin\gamma_{2n}}} \psi^{2}(|k|)e^{ikt}\bigg), \label{n1}
\end{align}
where $\psi(0):=\psi(1)$

Whereas
 \begin{equation}\label{int_riv}
  \int\limits_{-\pi}^{\pi}e^{ikt}e^{imt}dt=
{\left\{\begin{array}{cc}
0, \ & k+m\neq 0, \\
2\pi, &
k+m=0, \
  \end{array} \right.}  \ \ k,m\in\mathbb{Z},
\end{equation}
and taking into account  (\ref{val_pus2}), we obtain
\begin{equation}
\int\limits_{-\pi}^{\pi}(f^{*}_{p,n}(t)-S_{\gamma_{2n}}(f^{*}_{p,n};t))V_{2n}(t)dt
\end{equation} 
\begin{align}
&=\frac{\lambda_{p}}{4\psi(n)(\eta(n)-n)^{\frac{1}{p'}}}\int\limits_{-\pi}^{\pi}
 \bigg(
{\mathop{\sum}\limits_{
 0\leq k\leq n-1,\atop k \notin\gamma_{2n} }}
\psi(k)\psi(2n-k)e^{ikt}+{\mathop{\sum}\limits_{
 -n+1\leq k\leq -1,\atop k \notin\gamma_{2n} }}
\psi(|k|)\psi(2n-|k|)e^{ikt} \notag
\\
&+{\mathop{\sum}\limits_{
 n\leq k\leq 2n,\atop k \notin\gamma_{2n}}} \psi^{2}(k)e^{ikt}+
 {\mathop{\sum}\limits_{
-2n\leq k\leq -n,\atop k \notin\gamma_{2n}}} \psi^{2}(|k|)e^{ikt}\bigg)\times \notag
\\
& \times\Big(\sum\limits_{0\leq k\leq 2n}e^{ikt}+\sum\limits_{-2n\leq k\leq -1}e^{ikt}+
2\sum\limits_{2n+1\leq| k|\leq 4n-1}\Big(1-\frac{|k|}{4n}\Big)e^{ikt} \Big)dt 
\\
 & =\frac{\lambda_{p}\pi}{2\psi(n)(\eta(n)-n)^{\frac{1}{p'}}} \bigg(
{\mathop{\sum}\limits_{
 |k|\leq n-1,\atop k \notin\gamma_{2n} }}
\psi(|k|)\psi(2n-|k|)+
 {\mathop{\sum}\limits_{
n\leq |k|\leq 2n,\atop k \notin\gamma_{2n}}} \psi^{2}(|k|)\bigg).\label{a10}
\end{align}

The function $\phi_{n}(t):=\psi(t)\psi(2n-t)$ decreases for  ${t\in[1, n]}$. Indeed
\begin{align*}
\phi'_{n}(t)=|\psi'(t)||\psi'(2n-t)|\Big(\frac{\psi(t)}{|\psi'(t)|} -\frac{\psi(2n-t)}{|\psi'(2n-t)|}\Big)\leq0,
\end{align*}
because  $\frac{\psi(t)}{|\psi'(t)|}\uparrow\infty$ for large  $n$.

Thus, the monotonicity of function $\phi_{n}(t)$ and  (\ref{a10}) imply
\begin{align}
 &I_{1}
=\frac{\pi\lambda_{p}}{2\psi(n)(\eta(n)-n)^{\frac{1}{p'}}}\bigg( \psi^{2}(n)+
 {\mathop{\sum}\limits_{
n+1\leq |k|\leq 2n}} \psi^{2}(|k|)\bigg) \notag
\\
&>\frac{\pi\lambda_{p}}{2\psi(n)(\eta(n)-n)^{\frac{1}{p'}}}
\sum\limits_{k=n}^{2n} \psi^{2}(k)
\geq\frac{\pi\lambda_{p}}{2\psi(n)(\eta(n)-n)^{\frac{1}{p'}}}
\int\limits_{n}^{\eta(n)} \psi^{2}(t)dt \notag
\\
&>\frac{\pi\lambda_{p}}{2\psi(n)(\eta(n)-n)^{\frac{1}{p'}}}\psi^{2}(\eta(n))(\eta(n)-n)=\frac{\pi\lambda_{p}}{8}\psi(n)(\eta(n)-n)^{\frac{1}{p}}.\label{a11}
\end{align}

By considering (\ref{eq9}) and (\ref{a11}) we can write
\begin{equation}\label{h1}
 e^{\bot}_{2n}(L^{\psi}_{\beta,p})_{\infty}\geq e^{\bot}_{2n}(f^{*}_{p,n})_{\infty} \geq \frac{1}{3\pi}I_{1}\geq\frac{\lambda_{p}}{24}\psi(n)(\eta(n)-n)^{\frac{1}{p}}.
\end{equation}
Theorem 1 is proved.

\end{proof}

\begin{remark}\label{remarkTh1}
 Let $\psi\in\mathfrak{M}^{+}_{\infty}$,
$\beta\in \mathbb{R}$, ${1< p<\infty}$, $\frac{1}{p}+\frac{1}{p'}=1$, and the function $\frac{\psi(t)}{|\psi'(t)|}\uparrow\infty$ for $t\rightarrow\infty$. Then for $n\in
 \mathbb{N}$ the following estimates hold 
   \begin{equation}\label{remark1}
K_{b,p} \psi(n)(\eta(n)-n)^{\frac{1}{p}} \leq e^{\bot}_{2n}(L_{\beta,p}^{\psi})_{\infty}\leq e^{\bot}_{2n-1}(L_{\beta,p}^{\psi})_{\infty}  \leq K_{a,b,p} \psi(n)(\eta(n)-n)^{\frac{1}{p}},
\end{equation}
where 
\begin{equation}\label{Kab}
K_{a,b,p}=
\frac{1}{\pi}\max\left\{\frac{2b}{b-2}+\frac{1}{a}, \
2\pi\right\}(2p)^{\frac{1}{p'}}.
\end{equation}
\begin{equation}\label{Kb}
K_{b,p}= \frac{1}{48\max\Big\{\frac{5b}{b-2}, \ 4\pi\Big\}(p')^{\frac{1}{p}}}.
\end{equation}
\end{remark}

\section{Best orthogonal trigonometric approximations of the classes $L^{\psi}_{\beta,1}$ in the uniform metric}

\begin{theorem}\label{theoremComm}
 Let  $\psi\in\mathfrak{M}^{+}_{\infty}$. Then for all $\beta\in \mathbb{R}$ order estimates are true 
  \begin{equation}\label{theorem_3}
  e^{\bot}_{2n-1}(L_{\beta,1}^{\psi})_{\infty}\asymp e^{\bot}_{2n}(L_{\beta,1}^{\psi})_{\infty}\asymp \psi(n)(\eta(n)-n).
\end{equation}
\end{theorem}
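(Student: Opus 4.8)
The plan is to run the two-sided scheme of Theorem~\ref{theoremUniform} at $p=1$, where the exponents collapse to $\tfrac1p=1$ and $\tfrac1{p'}=0$. The upper bound is immediate: the cited Theorem~1 of \cite{S_S}, taken with $p=1$ (so that the prefactor $(2p)^{1-1/p}$ equals $1$ and $(\eta(n)-n)^{1/p}$ becomes $\eta(n)-n$), gives for all $n$ with $\mu(n)>b$ and $\eta(n)-n>a$ the bound ${\cal E}_n(L^{\psi}_{\beta,1})_\infty\le K_{a,b}\,\psi(n)(\eta(n)-n)$; together with (\ref{ineq_comp}) this yields $e^{\bot}_{2n}(L^{\psi}_{\beta,1})_\infty\le e^{\bot}_{2n-1}(L^{\psi}_{\beta,1})_\infty\le K_{a,b}\,\psi(n)(\eta(n)-n)$, the required majorant.

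For the lower bound I would use the test function of the form (\ref{function}) specialised to $p=1$, so that the normalising factor is $\lambda_1/\psi(n)$ (the power $(\eta(n)-n)^{1/p'}$ disappearing) and the Fourier coefficients of $f^{*}_{1,n}$ are supported on $\{|k|\le 2n\}$ with magnitude of order $\psi(n)$ near $|k|=n$. Testing $f^{*}_{1,n}$ against the de~la Vall\'ee--Poussin kernel $V_{2n}$ exactly as in (\ref{eq3})--(\ref{a11}) reduces $I_1$ to a sum of the surviving Fourier coefficients over $\{|k|\le 2n\}\setminus\gamma_{2n}$; bounding this sum below by its high-frequency part and using the monotonicity of $\psi^2$ together with $\psi(\eta(n))=\tfrac12\psi(n)$ gives $\sum_{k=n}^{2n}\psi^2(k)\ge\int_n^{\eta(n)}\psi^2(u)\,du\ge\psi^2(\eta(n))(\eta(n)-n)=\tfrac14\psi^2(n)(\eta(n)-n)$. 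Hence $I_1\ge c\,\psi(n)(\eta(n)-n)$ and, by (\ref{eq9}) and (\ref{eq61}), $e^{\bot}_{2n}(L^{\psi}_{\beta,1})_\infty\ge\tfrac1{3\pi}I_1\ge c'\,\psi(n)(\eta(n)-n)$.

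The step I expect to be the real obstacle, and the reason $p=1$ is singled out under the sole hypothesis $\psi\in\mathfrak{M}^{+}_{\infty}$, is the verification of $f^{*}_{1,n}\in L^{\psi}_{\beta,1}$, i.e.\ $\|(f^{*}_{1,n})^{\psi}_{\beta}\|_1\le1$. The argument for $p>1$ degenerates here: the pointwise bound (\ref{a6}) of order $|t|^{-1}$, which for $p>1$ integrates against $\int|t|^{-p}\,dt$ to the finite factor $(p')^{1/p}$, now produces $\int_{1/(\eta(n)-n)}^{\pi}|t|^{-1}\,dt\asymp\log(\eta(n)-n)$, which diverges; this is mirrored by the constant $K_{b,p}$ of (\ref{Kb}), whose denominator contains $(p')^{1/p}\to\infty$ as $p\to1$. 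I would instead exploit the convexity of $\psi$. Extracting the modulation $e^{int}$ from (\ref{a2}), the derivative is governed by the cosine series $\tfrac12\psi(n)+\sum_{j\ge1}\psi(n+j)\cos jt$, whose coefficients $\psi(n+j)$ form a convex null sequence; by the classical fact that such a cosine series is a nonnegative (Fej\'er-type) kernel, its $L_1$-norm equals $2\pi$ times its zeroth coefficient and is thus of order $\psi(n)$, the truncated tails being negligible compared with $\psi(n)$ by Proposition~\ref{statement1}. After the $1/\psi(n)$ normalisation this gives $\|(f^{*}_{1,n})^{\psi}_{\beta}\|_1\le C\lambda_1$ with $C$ absolute, so a fixed $\lambda_1$ secures membership and the lower bound holds with an absolute constant. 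It is precisely this positivity argument, rather than a size-and-variation estimate, that removes both the log-convexity condition $\tfrac{\psi(t)}{|\psi'(t)|}\uparrow\infty$ and the lower regularity $\eta(t)-t\ge K$ assumed in Theorem~\ref{theoremUniform}.
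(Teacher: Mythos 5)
You correctly single out the membership $f^{*}_{1,n}\in L^{\psi}_{\beta,1}$ as the crux, but your resolution of it has a genuine gap, and the gap is exactly the truncation you wave away. The factorisation of $(f^{*}_{1,n})^{\psi}_{\beta}$ yields the \emph{truncated} series $g_{n-1}(t)=\tfrac{1}{2}\psi(n)+\sum_{j=1}^{n-1}\psi(n+j)\cos jt$ (the spectrum of $f^{*}_{1,n}$ stops at $2n$), whereas nonnegativity and the identity ``$L_{1}$-norm $=$ $\pi\times$ zeroth coefficient'' hold for the \emph{infinite} convex series. The discrepancy is $\sum_{j\geq n}\psi(n+j)\cos jt$, and Proposition~\ref{statement1} does not make it negligible: it only gives $\sum_{k\geq 2n}\psi(k)\lesssim\psi(2n)\big(1+\eta(2n)-2n\big)\lesssim\psi(n)\big(1+\eta(n)-n\big)$, so after your $\lambda_{1}/\psi(n)$ normalisation the resulting bound is $1+\eta(n)-n$, which is unbounded precisely in the regime the theorem addresses --- it \emph{is} the main term of (\ref{theorem_3}). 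Nor can a sharper tail estimate rescue the argument: for convex null coefficients the $L_{1}$ truncation error carries a Dirichlet-kernel boundary term of order $a_{N}\log N$ (recall that $L_{1}$-convergence of such series is equivalent to $a_{N}\log N\to 0$), here $\psi(2n)\log n$; and for $\psi(t)=t^{-\ln\ln t}$ (large $t$), which lies in $\mathfrak{M}^{''}_{\infty}$ and has $\psi/|\psi'|\uparrow\infty$, so it satisfies even all hypotheses of Theorem~\ref{theoremUniform}, one computes $\psi(2n)/\psi(n)\asymp(\log n)^{-\ln 2}$, hence $\psi(2n)\log n/\psi(n)\asymp(\log n)^{1-\ln 2}\to\infty$, while $\sum_{k\geq 2n}\psi(k)/\psi(n)\asymp n(\log n)^{-\ln 2}/\ln\ln n\to\infty$. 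So no absolute constant $\lambda_{1}$ makes your test function lie in the class uniformly over admissible $\psi$ and $n$. A secondary gap in the same part: your evaluation of $I_{1}$ (identifying which $2n+1$ coefficients survive the infimum over $\gamma_{2n}$) requires comparing the low-frequency weights $\psi(|k|)\psi(2n-|k|)$ with the high-frequency ones $\psi^{2}(|k|)$, i.e.\ the monotonicity of $\phi_{n}(t)=\psi(t)\psi(2n-t)$, which rests on $\psi/|\psi'|\uparrow\infty$ --- a hypothesis of Theorem~\ref{theoremUniform} that Theorem~\ref{theoremComm} deliberately does not assume.

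The paper's proof is built to avoid both traps, and it differs from your plan in both halves. For the lower bound it does not reuse (\ref{function}) at all: it takes the function $f^{*}_{m}$ of (\ref{eq26}), whose Fourier coefficients are $k\psi(k)$, resp.\ $(2m+1-k)\psi(k)$, so that the $\psi$'s cancel under $(\psi,\beta)$-differentiation and $(f^{*}_{m})^{\psi}_{\beta}$ becomes a modulated kernel with \emph{triangular, $\psi$-free} coefficients (a Fej\'er-type kernel, with no Dirichlet boundary term since the triangular profile vanishes at the ends of the spectral window); its $L_{1}$-norm is bounded by an absolute constant uniformly in $m$ and $\psi$, which is the computation cited from \cite{Stepaniuk2014}, and this is what allows membership with no convexity, log-convexity or regularity demands on $\psi$ beyond $\mathfrak{M}^{+}_{\infty}$. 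For the upper bound, your appeal to Theorem 1 of \cite{S_S} at $p=1$ needs $\eta(n)-n\geq a>2$, which $\mathfrak{M}^{+}_{\infty}$ alone does not supply (e.g.\ $\psi(t)=e^{-t^{2}}\in\mathfrak{M}^{+}_{\infty}$ has $\eta(n)-n\to 0$); the paper instead uses formula (48) of \cite{S_S}, ${\cal E}_{n}(L^{\psi}_{\beta,1})_{\infty}\leq\frac{1}{\pi}\sum_{k\geq n}\psi(k)$, valid for any summable $\psi\in\mathfrak{M}$, combined with Proposition~\ref{statement1}.
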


\begin{proof}
According to formula (48)  from   \cite{S_S} under conditions $\psi\in\mathfrak{M}$,  $\sum\limits_{k=1}^{\infty}\psi(k)<\infty$,
$\beta\in \mathbb{R}$,  for all $n\in\mathbb{N}$ the following estimate holds
\begin{equation}\label{t2}
 {\cal E}_{n}(L^{\psi}_{\beta,1})_{\infty}
\leq\frac{1}{\pi}\sum\limits_{k=n}^{\infty}\psi(k).
\end{equation}
Using Proposition 1, we have
\begin{multline}\label{t4}
  e^{\bot}_{2n}(L^{\psi}_{\beta,1})_{\infty}\leq e^{\bot}_{2n-1}(L^{\psi}_{\beta,1})_{\infty}
\leq{\cal E}_{n}(L^{\psi}_{\beta,1})_{\infty}
\leq\frac{1}{\pi}\sum\limits_{k=n}^{\infty}\psi(k)
\\
\leq\frac{1}{\pi}\bigg(\psi(n)+\int\limits_{n}^{\infty}\psi(u)du\bigg)\leq
\frac{\psi(n)}{\pi}\bigg(1+\frac{b}{b-2}(\eta(n)-n)\bigg).
\end{multline}

Let us find the lower estimate for the quantity $e^{\bot}_{2n}(L_{\beta,1}^{\psi})_{\infty}$.

We consider the quantity
\begin{equation}\label{eq25}
  I_{2}:=\inf\limits_{\gamma_{2n}}\bigg|\int\limits_{-\pi}^{\pi}(f^{*}_{2n}(t)-S_{\gamma_{2n}}(f^{*}_{2n};t))V_{2n}(t)dt\bigg|,
\end{equation}
where  $V_{m}$ are de la $\mathrm{Vall\acute{e}e}$-Poisson kernels of the form (\ref{val_pus2}), and
\begin{equation}\label{eq26}
f^{*}_{m}(t)=f^{*}_{m}(\psi;t):=\frac{1}{5\pi m}\Big(\frac{1}{2}\psi(1)+\sum\limits_{k=1}^{m}k\psi(k)\cos kt+
\sum\limits_{k=m+1}^{2m}(2m+1-k)\psi(k)\cos kt\Big).
\end{equation}
In \cite[p. 263--265]{Stepaniuk2014}  it was shown that $\|(f^{*}_{m})^{\psi}_{\beta}\|_{1}\leq1$, i.e., $f^{*}_{m}$  belongs to the class
 $L^{\psi}_{\beta,1}$ for all $m\in\mathbb{N}$.

Using Proposition A1.1 from  \cite{Korn} and inequality (\ref{eq61}), we have
\begin{equation}\label{eq27}
I_{2}\leq\inf\limits_{\gamma_{2n}}\|f^{*}_{2n}(t)-S_{\gamma_{2n}}(f^{*}_{2n};t)\|_{\infty}\|V_{2n}\|_{1}
\leq3\pi \ e^{\bot}_{2n}(f_{2n}^{*})_{\infty}.
\end{equation}

Assuming again $\psi(0):=\psi(1)$, from (\ref{val_pus2}) and (\ref{eq26}), we derive
\begin{align}
 &I_{2}=\frac{1}{20\pi n}\inf\limits_{\gamma_{2n}}\bigg|\int\limits_{-\pi}^{\pi}
 \Big({\mathop{\sum}\limits_{
|k|\leq 2n,\atop k \notin\gamma_{2n} }}
 |k|\psi(|k|)e^{ikt}
 + {\mathop{\sum}\limits_{2n+1\leq| k|\leq 4n,\atop k \notin\gamma_{2n} }}
 (4n+1-|k|)\psi(|k|)e^{ikt}\Big)\times \notag
\\
&\times\Big(\sum\limits_{|k|\leq 2n}e^{ikt}+
2\sum\limits_{2n+1\leq| |k\leq 4n-1}\Big(1-\frac{|k|}{4n}\Big)e^{ikt}\Big)dt\bigg| \notag
\\
&
=\frac{1}{10 n}\inf\limits_{\gamma_{2n}}\bigg({\mathop{\sum}\limits_{
| k|\leq 2n,\atop k \notin\gamma_{2n} }}
 |k|\psi(|k|) 
 +{\mathop{\sum}\limits_{2n+1\leq| k|\leq 4n,\atop k \notin\gamma_{2n} }}\Big(1-\frac{|k|}{4n}\Big)
 \psi(|k|)\bigg) \notag
 \\
&>\frac{1}{10 n}\inf\limits_{\gamma_{2n}}
{\mathop{\sum}\limits_{
 | k|\leq 2n,\atop k \notin\gamma_{2n} }}
 |k|\psi(|k|)=
 \frac{1}{10 n}\bigg(n\psi(n)+2{\mathop{\sum}\limits_{
 k=n+1}^{2n}} k\psi(k)\bigg) \notag
\\
 & >\frac{1}{10 }{\mathop{\sum}\limits_{
 k=n}^{2n}} \psi(k)>\frac{1}{10 }\int\limits_{n}^{\eta(n)} \psi(t)dt>\frac{1}{20}\psi(n)(\eta(n)-n).\label{a15}
\end{align}

Formulas (\ref{eq27}) and (\ref{a15}) imply
\begin{align*}
e^{\bot}_{2n}(L_{\beta,1}^{\psi})_{\infty}\geq\ e^{\bot}_{2n}(f_{2n}^{*})_{\infty}\geq\frac{1}{3\pi}I_{2}>\frac{1}{60\pi}\psi(n)(\eta(n)-n).
\end{align*}
Theorem~\ref{theoremComm} is proved.
\end{proof}

\begin{remark}\label{remarkComm}
 Let $\psi\in\mathfrak{M}^{+}_{\infty}$ and
$\beta\in \mathbb{R}$. Then for  $n\in
 \mathbb{N}$, such that  ${\mu(n)\geq b>2}$ the following estimate hold
   \begin{equation}\label{remark3}
\frac{1}{60\pi}\psi(n)(\eta(n)-n) \leq e^{\bot}_{2n}(L_{\beta,1}^{\psi})_{\infty}\leq e^{\bot}_{2n-1}(L_{\beta,1}^{\psi})_{\infty}  \leq \frac{1}{\pi}\Big(\frac{1}{b}+\frac{b}{b-2}\Big) \psi(n)(\eta(n)-n).
\end{equation}
\end{remark}

\begin{corollary}\label{cor1}
 Let $r\in(0,1)$, $\alpha>0$, $1\leq p<\infty$ and $\beta\in\mathbb{R}$. Then for all $n\in\mathbb{N}$ the following estimates are true
  \begin{equation}\label{corollary1}
 e^{\bot}_{n}(L_{\beta,p}^{\alpha,r})_{\infty}\asymp \exp(-\alpha n^{r})n^{\frac{1-r}{p}}.
\end{equation}
\end{corollary}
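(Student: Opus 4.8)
The plan is to read the Corollary as the specialization of Theorems~\ref{theoremUniform} and~\ref{theoremComm} to the particular weight $\psi=\psi_{r,\alpha}(t)=\exp(-\alpha t^{r})$, $r\in(0,1)$, so the whole argument reduces to two things: checking the hypotheses of the two theorems for this $\psi$, and evaluating the quantities $\psi(n)$ and $\eta(n)-n$ that appear on their right-hand sides. First I would record that $\psi_{r,\alpha}\in\mathfrak{M}''_{\infty}$ for every $r\in(0,1]$ (stated in the Introduction), hence in particular $\psi_{r,\alpha}\in\mathfrak{M}^{+}_{\infty}$; this already supplies every structural assumption of Theorem~\ref{theoremComm}, which covers the case $p=1$. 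For Theorem~\ref{theoremUniform} there is the extra requirement $\psi(t)/|\psi'(t)|\uparrow\infty$, which I would verify by the direct computation $|\psi'(t)|=\alpha r\,t^{r-1}\exp(-\alpha t^{r})$, whence
\[
\frac{\psi(t)}{|\psi'(t)|}=\frac{1}{\alpha r}\,t^{1-r},
\]
and since $r<1$ this increases to $+\infty$. Thus the hypotheses are in force for all $1\le p<\infty$.

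The second and main computational step is to determine the order of $\eta(n)-n$. Inverting $\psi$ gives $\psi^{-1}(y)=\bigl(\alpha^{-1}\ln(1/y)\bigr)^{1/r}$, so from $\eta(t)=\psi^{-1}(\psi(t)/2)$ I obtain the closed form
\[
\eta(t)=\Bigl(t^{r}+\tfrac{\ln 2}{\alpha}\Bigr)^{1/r}.
\]
Writing $c=\tfrac{\ln 2}{\alpha}$ and applying the mean value theorem to $s\mapsto s^{1/r}$ on $[t^{r},t^{r}+c]$ (equivalently the expansion $(t^{r}+c)^{1/r}=t\,(1+c\,t^{-r})^{1/r}$), I would get $\eta(t)-t=\tfrac{c}{r}\,\xi^{(1-r)/r}$ for some $\xi\in(t^{r},t^{r}+c)$, and since $\xi\asymp t^{r}$ this yields
\[
\eta(n)-n\asymp n^{1-r}
\]
with constants depending only on $\alpha$ and $r$. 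This is the one place where $r<1$ is essential, since it guarantees genuine power growth of $\eta(n)-n$; making the two-sided bound fully explicit is the step I expect to demand the most care.

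With these two facts the conclusion is pure substitution. For $1<p<\infty$, Theorem~\ref{theoremUniform} gives
\[
e^{\bot}_{2n-1}(L^{\alpha,r}_{\beta,p})_{\infty}\asymp e^{\bot}_{2n}(L^{\alpha,r}_{\beta,p})_{\infty}\asymp\psi(n)\bigl(\eta(n)-n\bigr)^{1/p}\asymp\exp(-\alpha n^{r})\,n^{(1-r)/p},
\]
while for $p=1$ one invokes Theorem~\ref{theoremComm} instead, whose governing factor is $\psi(n)(\eta(n)-n)\asymp\exp(-\alpha n^{r})\,n^{1-r}$; since $\tfrac{1-r}{p}=1-r$ in that case, the right-hand side has exactly the same shape. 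As $n$ ranges over $\mathbb{N}$ the index families $\{2n-1\}$ and $\{2n\}$ together exhaust all natural numbers, so both parities of the index are covered by the two theorems, and substituting the computed values of $\psi(n)$ and $\eta(n)-n$ delivers the order relation $e^{\bot}_{n}(L^{\alpha,r}_{\beta,p})_{\infty}\asymp\exp(-\alpha n^{r})\,n^{(1-r)/p}$ asserted in the Corollary.
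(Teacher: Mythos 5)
Your derivation follows exactly the route the paper intends (the corollary is stated without proof, as a specialization of Theorems~\ref{theoremUniform} and~\ref{theoremComm}), and the substantive steps are correct: $\psi_{r,\alpha}\in\mathfrak{M}''_{\infty}\subset\mathfrak{M}^{+}_{\infty}$ for $r\in(0,1]$; the monotonicity $\psi(t)/|\psi'(t)|=\tfrac{1}{\alpha r}\,t^{1-r}\uparrow\infty$ holds precisely because $r<1$; and the computation $\eta(t)=\bigl(t^{r}+\tfrac{\ln 2}{\alpha}\bigr)^{1/r}$, giving $\eta(n)-n\asymp n^{1-r}$, is right. Substitution therefore yields, for all $1\le p<\infty$,
\[
e^{\bot}_{2n-1}(L^{\alpha,r}_{\beta,p})_{\infty}\asymp e^{\bot}_{2n}(L^{\alpha,r}_{\beta,p})_{\infty}\asymp \exp(-\alpha n^{r})\,n^{\frac{1-r}{p}}.
\]

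The gap is in your final sentence, where you pass from this to $e^{\bot}_{n}\asymp\exp(-\alpha n^{r})n^{(1-r)/p}$ ``because the two parities exhaust $\mathbb{N}$''. Re-indexing is not harmless here: putting $m=2n$, what you have actually proved is $e^{\bot}_{m}\asymp\exp\bigl(-\alpha(m/2)^{r}\bigr)(m/2)^{(1-r)/p}$, and for $r\in(0,1)$
\[
\frac{\exp\bigl(-\alpha(m/2)^{r}\bigr)}{\exp(-\alpha m^{r})}=\exp\bigl(\alpha(1-2^{-r})m^{r}\bigr)\rightarrow\infty,
\]
so the two candidate right-hand sides are not of the same order. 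This is precisely where exponential weights differ from power weights $\psi(k)=k^{-\rho}$, for which $\psi(m/2)\asymp\psi(m)$ and the parity argument would be harmless. Note that the defect is inherited from the printed statement itself: taken literally, (\ref{corollary1}) contradicts the lower bound of Theorem~\ref{theoremUniform}, since that theorem forces $e^{\bot}_{m}\geq C\exp\bigl(-\alpha(m/2)^{r}\bigr)(m/2)^{(1-r)/p}$ for even $m$, which is exponentially larger than $\exp(-\alpha m^{r})m^{(1-r)/p}$. The mathematically correct conclusion of your (and the paper's intended) argument is the $2n$-indexed display above, equivalently $e^{\bot}_{n}(L^{\alpha,r}_{\beta,p})_{\infty}\asymp\exp(-\alpha 2^{-r}n^{r})\,n^{(1-r)/p}$; you should state that form, or explicitly flag the discrepancy, rather than assert (\ref{corollary1}) verbatim.
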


\section{Best orthogonal trigonometric approximations of the classes $L^{\psi}_{\beta,1}$ in the  metric of spaces $L_{s}$, $1<s<\infty$}

\begin{theorem}\label{theoremIntegral}
 Let  ${1< s<\infty}$, $\psi\in\mathfrak{M}^{''}_{\infty}$
and function  $\frac{\psi(t)}{|\psi'(t)|}\uparrow\infty$ as $t\rightarrow\infty$ . Then for all $\beta\in \mathbb{R}$  order estimates hold
  \begin{equation}\label{theorem_2}
  e^{\bot}_{2n-1}(L_{\beta,1}^{\psi})_{s}\asymp e^{\bot}_{2n}(L_{\beta,1}^{\psi})_{s}\asymp \psi(n)(\eta(n)-n)^{\frac{1}{s'}}, \ \ \frac{1}{s}+\frac{1}{s'}=1.
\end{equation}
\end{theorem}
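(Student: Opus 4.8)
The plan is to establish the two matching bounds separately, mirroring the proofs of Theorems \ref{theoremUniform} and \ref{theoremComm}; the target order is $\psi(n)(\eta(n)-n)^{1/s'}$.

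\textit{Upper bound.} By the chain (\ref{ineq_comp}) it suffices to bound ${\cal E}_{n}(L^{\psi}_{\beta,1})_{s}$. For $f\in L^{\psi}_{\beta,1}$ one has the convolution representation $\rho_{n}(f;\cdot)=f^{\psi}_{\beta}\ast\Psi_{n,\beta}$ with $\Psi_{n,\beta}(t)=\sum_{|k|\geq n}\psi(|k|)e^{i(kt-\frac{\beta\pi}{2}\mathrm{sign}\,k)}$, so Young's inequality together with $\|f^{\psi}_{\beta}\|_{1}\leq1$ yields ${\cal E}_{n}(L^{\psi}_{\beta,1})_{s}\leq\|\Psi_{n,\beta}\|_{s}$. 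It then remains to prove $\|\Psi_{n,\beta}\|_{s}\leq C\,\psi(n)(\eta(n)-n)^{1/s'}$. Since $\psi\in\mathfrak{M}''_{\infty}$ decays faster than any power, the bulk of $\Psi_{n,\beta}$ comes from $|k|\in[n,\eta(n)]$, where $\psi(|k|)\asymp\psi(n)$; there $\Psi_{n,\beta}$ behaves like $\psi(n)$ times a modulated Dirichlet kernel of width $\eta(n)-n$, and the bound follows from Abel summation, Proposition \ref{statement1} (to control the tail $\sum_{k\geq\eta(n)}\psi(k)$), and the classical relation $\|D_{N}\|_{s}\asymp N^{1/s'}$; this is the estimate furnished by \cite{S_S}.

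\textit{Lower bound.} I would exhibit a single $f^{*}\in L^{\psi}_{\beta,1}$ with $e^{\bot}_{2n}(f^{*})_{s}\gtrsim\psi(n)(\eta(n)-n)^{1/s'}$. Following Theorem \ref{theoremComm}, take $f^{*}$ whose $(\psi,\beta)$-derivative is a normalized de la Vall\'ee--Poussin kernel, $(f^{*})^{\psi}_{\beta}=\frac{1}{3\pi}V_{n}$ (phased according to $\beta$); membership in the class is immediate from (\ref{eq61}). Then $\hat{f^{*}}(k)$ has coherent phase with $|\hat{f^{*}}(k)|\asymp\psi(|k|)$ for $|k|\leq2n$, the coefficients being genuinely of size $\asymp\psi(n)$ on the block $[n,\eta(n)]$. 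The point is that the rapidly decaying, positive profile $\psi(|k|)$ forces a coherent $\psi(n)$-weighted block of width $\asymp\eta(n)-n$ to survive any deletion of $2n$ frequencies; estimating its $L_{s}$-norm from below by $\psi(n)\,\|D_{\eta(n)-n}\|_{s}\asymp\psi(n)(\eta(n)-n)^{1/s'}$ then gives the required order.

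\textit{Main obstacle.} The difficulty is to make this last estimate uniform over all admissible $\gamma_{2n}$. Pairing against $V_{2n}$ exactly as in (\ref{eq27})--(\ref{a15}) is \emph{not} adequate: because $\|V_{2n}\|_{s'}\asymp n^{1/s}$ is too large, that route only yields the weaker order $\psi(n)(\eta(n)-n)/n^{1/s}$. In contrast to the $L_{\infty}$ setting of Theorem \ref{theoremComm}, for $1<s<\infty$ the $L_{s}$-norm of the undeleted part depends on the \emph{arrangement} of the surviving frequencies and not merely on the sum of their coefficients, so one must rule out the adversary dispersing them to cheapen the norm. I would resolve this by a duality argument with a kernel tuned to the correct scale: bound $\|f^{*}-S_{\gamma_{2n}}(f^{*})\|_{s}$ from below by testing against a de la Vall\'ee--Poussin kernel of width $\asymp\eta(n)-n$ (rather than $\asymp n$), translated onto a surviving coherent block. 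Such a kernel has $L_{s'}$-norm $\asymp(\eta(n)-n)^{1/s}$, while its pairing with the block is $\asymp\psi(n)(\eta(n)-n)$, and the ratio reproduces the sharp order. The technical heart is the combinatorial claim that, for \emph{every} choice of $2n$ deleted frequencies, some translate of a width-$(\eta(n)-n)$ block survives with coefficients still $\asymp\psi(n)$ --- equivalently, that $2n$ deletions cannot simultaneously destroy the coherence of all such blocks and remove the larger low-frequency coefficients. At the single value $s=2$ the order is transparent from Parseval applied to the survivors, but the general case $1<s<\infty$ genuinely requires the scale-$(\eta(n)-n)$ duality just described.
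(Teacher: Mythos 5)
Your upper bound is fine and coincides with the paper's: both reduce to the estimate of ${\cal E}_{n}(L^{\psi}_{\beta,1})_{s}$ from Theorem~2 of \cite{S_S} via (\ref{ineq_comp}), and your choice of extremal function for the lower bound --- the function $f^{*}$ with $(f^{*})^{\psi}_{\beta}=\frac{1}{3\pi}V_{n}$, which lies in $L^{\psi}_{\beta,1}$ by (\ref{eq61}) --- is the right one. The genuine gap is at what you call the technical heart: your combinatorial claim is \emph{false}. The coefficients of $f^{*}$ are $\gtrsim\psi(n)$ only for $|k|\lesssim\eta(n)$, i.e.\ on roughly $2n+2(\eta(n)-n)$ frequencies, and since $\eta(n)-n=n/\mu(n)=o(n)$, the budget of $2n$ deletions lets the adversary remove every other one of these (cost about $n+(\eta(n)-n)$), or even all but $2(\eta(n)-n)$ of them, left in an arbitrarily well-separated pattern. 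So no translate of a width-$(\eta(n)-n)$ block --- indeed no block of width $2$ --- need survive in the region where the coefficients are $\asymp\psi(n)$, and your translated de la Vall\'ee--Poussin pairing produces nothing there. Moreover, for such well-spread survivors nothing in your sketch excludes that their contribution has $L_{s}$-norm only of order $\psi(n)(\eta(n)-n)^{1/2}$ (your Parseval bound), which is strictly below the target when $s>2$; any repair would have to exploit the untouched frequencies $\eta(n)<|k|\le 2n$, a case analysis you do not provide.

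The paper's mechanism avoids arrangement questions entirely, which shows that your ``main obstacle'' is an artifact of your route rather than of the problem. One pairs the remainder $f^{*}-S_{\gamma_{2n}}(f^{*})$ against a \emph{single fixed} test function whose Fourier coefficients have modulus $\asymp\psi(2n-|k|)\big/\big(\psi(n)(\eta(n)-n)^{1/s}\big)$ for $|k|\le n-1$ and $\asymp\psi(|k|)\big/\big(\psi(n)(\eta(n)-n)^{1/s}\big)$ for $n\le|k|\le 2n$, with phases conjugate to those of $f^{*}$; up to phases this is exactly $(f^{*}_{s',n})^{\psi}_{\beta}$ from (\ref{a2}), (\ref{function}) with $p=s'$, whose $L_{s'}$-norm is $\le 1$ by the very computation in the proof of Theorem~\ref{theoremUniform}. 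By orthogonality (\ref{int_riv}), the pairing equals, up to absolute constants,
\begin{equation*}
\frac{1}{\psi(n)(\eta(n)-n)^{\frac{1}{s}}}\bigg({\mathop{\sum}\limits_{|k|\leq n-1,\atop k\notin\gamma_{2n}}}\psi(|k|)\psi(2n-|k|)+{\mathop{\sum}\limits_{n\leq|k|\leq 2n,\atop k\notin\gamma_{2n}}}\psi^{2}(|k|)\bigg),
\end{equation*}
a linear functional of the surviving set with positive terms that are nonincreasing in $|k|$ (by the monotonicity of $\psi(t)\psi(2n-t)$ established in Theorem~\ref{theoremUniform}). Its infimum over \emph{all} $\gamma_{2n}$ is therefore attained by deleting the $2n$ largest terms, leaving at least $\sum_{k=n}^{2n}\psi^{2}(k)\geq\int_{n}^{\eta(n)}\psi^{2}(t)\,dt\geq\frac{1}{4}\psi^{2}(n)(\eta(n)-n)$, which gives the order $\psi(n)(\eta(n)-n)^{1/s'}$ as in (\ref{a12}); H\"older then converts this into the lower bound no matter how the survivors are dispersed. (Be aware that the paper itself states this with the roles garbled: it declares $f^{**}_{2n}=\frac{1}{3\pi}V_{2n}$ to be the class element, which cannot lie in $L^{\psi}_{\beta,1}$ because its $(\psi,\beta)$-derivative has coefficients of size $1/\psi(|k|)$; the reading consistent with its computation (\ref{a12}) is the one just described, with your $f^{*}$ as the class element and the $\psi$-mirrored profile of bounded $L_{s'}$-norm as the test function.)
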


\begin{proof}
According to Theorem 2 from   \cite{S_S} under conditions $\psi\in\mathfrak{M}^{+}_{\infty}$,
$\beta\in \mathbb{R}$, ${1< s\leq\infty}$ for  $n\in
 \mathbb{N}$, such that ${\eta(n)-n\geq a>2}, \ {\mu(n)\geq b>2}$ the following estimate holds
\begin{equation}\label{th2}
  {\cal E}_{n}(L^{\psi}_{\beta,1})_{s}
\leq
K_{a,b} \ (2s')^{\frac{1}{s}} \psi(n)(\eta(n)-n)^{\frac{1}{s'}}.
\end{equation}

Using inequalities (\ref{ineq_comp}) and (\ref{th2}),  we get
\begin{equation}\label{a1}
e^{\bot}_{2n}(L_{\beta,1}^{\psi})_{s}\leq e^{\bot}_{2n-1}(L_{\beta,1}^{\psi})_{s}\leq
K_{a,b,s'} \ \left(2s'\right)^{\frac{1}{s}} \psi(n)(\eta(n)-n)^{\frac{1}{s'}}.
\end{equation}
Let us find the lower estimate of the quantity $e^{\bot}_{2n}(L_{\beta,1}^{\psi})_{s}$.

We consider the quantity
\begin{equation}\label{eq3}
  I_{3}:=\inf\limits_{\gamma_{2n}}\bigg|\int\limits_{-\pi}^{\pi}(f^{**}_{2n}(t)-S_{\gamma_{2n}}(f^{**}_{2n};t))f^{*}_{s',n}(t)dt\bigg|,
\end{equation}
where
\begin{align*}
f^{**}_{m}(t)=\frac{1}{3\pi}V_{m}(t),
\end{align*}
and $f^{*}_{s',n}$ is defined by formula (\ref{function}).

On the basis of  Proposition A1.1 from  \cite{Korn} we derive
\begin{equation}\label{for200}
 I_{3}\leq\inf\limits_{\gamma_{2n}}\|f^{**}_{2n}(t)-S_{\gamma_{2n}}(f^{**}_{2n};t)\|_{s}\|f^{*}_{s'}\|_{s'}\leq
 e^{\bot}_{2n}(f^{**}_{2n})_{s}.
\end{equation}

On other hand, using formulas (\ref{int_riv}), we write
\begin{align*}
& I_{3}=\frac{\lambda_{s'}}{12\pi\psi(n)(\eta(n)-n)^{\frac{1}{s}}}\inf\limits_{\gamma_{2n}}\bigg|\int\limits_{-\pi}^{\pi}
 \Big({\mathop{\sum}\limits_{
|k|\leq 2n,\atop k \notin\gamma_{2n} }}
e^{ikt}+
2{\mathop{\sum}\limits_{
2n+1\leq |k|\leq 4n-1,\atop k \notin\gamma_{2n} }}
\Big(1-\frac{|k|}{4n}\Big)e^{ikt}
\Big)\times \notag
\\
&\times \bigg(
{\mathop{\sum}\limits_{
| k|\leq n-1 }}
\psi(|k|)\psi(2n-|k|)e^{ikt}+
{\mathop{\sum}\limits_{
 n\leq |k|\leq 2n}} \psi^{2}(|k|)e^{ikt}
\bigg)dt\bigg| \notag
\end{align*}
\begin{align}
&=\frac{\lambda_{s'}}{6\psi(n)(\eta(n)-n)^{\frac{1}{s}}}\inf\limits_{\gamma_{2n}}
 \Big(
 {\mathop{\sum}\limits_{
 | k|\leq n-1,\atop k \notin\gamma_{2n} }}
\psi(|k|)\psi(2n-|k|)
+
{\mathop{\sum}\limits_{
 n\leq |k|\leq 2n,\atop k \notin\gamma_{2n} }} \psi^{2}(|k|)
\Big) \notag
 \\
&=\frac{\lambda_{s'}}{6\psi(n)(\eta(n)-n)^{\frac{1}{s}}}
 \Big(\psi^{2}(n)+
2\sum\limits_{k=n+1}^{2n} \psi^{2}(k)\Big)>
\frac{\lambda}{6\pi\psi(n)(\eta(n)-n)^{\frac{1}{s}}}
\sum\limits_{k=n}^{2n} \psi^{2}(k) \notag
\\
 & >
\frac{\lambda_{s'}}{6\psi(n)(\eta(n)-n)^{\frac{1}{s}}}\int\limits_{n}^{\eta(n)} \psi^{2}(t)dt>
\frac{\lambda_{s'}}{24}\psi(n)(\eta(n)-n)^{\frac{1}{s'}}.\label{a12}
\end{align}

Hence, formulas (\ref{for200}) and (\ref{a12}) imply
\begin{equation}\label{h2}
 e^{\bot}_{2n}(L^{\psi}_{\beta,1})_{s}\geq e^{\bot}_{2n}(f^{**}_{s'})_{s} \geq I_{3}\geq\frac{\lambda_{s'}}{24}\psi(n)(\eta(n)-n)^{\frac{1}{s'}}.
\end{equation}

Theorem~\ref{theoremIntegral} is proved.

\end{proof}

Note, that  functions 

1)  $e^{-\alpha t^{r}}t^{\gamma}$, $\alpha>0, \ r\in(0,1], \ \gamma\in\mathbb{R}$;

2)
$e^{-\alpha t^{r}}\ln (t+K)$, $\alpha>0, \ r\in(0,1], \ \ K>e-1$,

\noindent etc.,  can be regarded as examples of functions 
$\psi$, which satisfy the conditions of Theorem~\ref{theoremUniform} and Theorem~\ref{theoremIntegral}. 

\begin{remark}\label{remark2}
 Let $\psi\in\mathfrak{M}^{+}_{\infty}$,
$\beta\in \mathbb{R}$, ${1\leq p<\infty}$ and function $\frac{\psi(t)}{|\psi'(t)|}\uparrow\infty$  as $t\rightarrow\infty$. Then for all $n\in
 \mathbb{N}$, such tthe following estimates are true 
\begin{multline}
 K_{b,s'} \psi(n)(\eta(n)-n)^{\frac{1}{s'}} \leq e^{\bot}_{2n}(L_{\beta,1}^{\psi})_{s}\leq e^{\bot}_{2n-1}(L_{\beta,1}^{\psi})_{s}
\\
 \leq K_{a,b,s'} \psi(n)(\eta(n)-n)^{\frac{1}{s'}},
\end{multline}
where
$K_{a,b,s'}$ and $K_{b,s'}$ are defined by formulas (\ref{Kab}) and (\ref{Kb}) respectively.
\end{remark}

\begin{corollary}\label{cor2}
 Let $r\in(0,1)$, $\alpha>0$, $1<s<\infty$ and $\beta\in\mathbb{R}$. Then for all $n\in\mathbb{N}$ the following estimates are true
  \begin{equation}\label{corollary2}
 e^{\bot}_{n}(L_{\beta,1}^{\alpha,r})_{s}\asymp \exp(-\alpha n^{r})n^{\frac{1-r}{s'}}, \ \ \frac{1}{s}+\frac{1}{s'}=1.
\end{equation}
\end{corollary}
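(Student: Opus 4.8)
The plan is to obtain Corollary~\ref{cor2} as a direct specialization of Theorem~\ref{theoremIntegral} to the family $\psi=\psi_{r,\alpha}(t)=\exp(-\alpha t^{r})$, $r\in(0,1)$, $\alpha>0$, which generates the classes $L^{\alpha,r}_{\beta,1}$. Thus the work splits into three parts: checking that $\psi_{r,\alpha}$ meets the hypotheses of Theorem~\ref{theoremIntegral}, evaluating the order of the governing quantity $\psi(n)(\eta(n)-n)^{1/s'}$ for this $\psi$, and transferring the resulting two-sided estimate from the paired indices $2n-1,\,2n$ to a single running index.

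For the first part, the membership $\psi_{r,\alpha}\in\mathfrak{M}^{''}_{\infty}$ for $r\in(0,1]$ is already recorded in the Introduction, so it remains only to verify the monotonicity condition on $\psi/|\psi'|$. Differentiating, $\psi'(t)=-\alpha r\,t^{r-1}\exp(-\alpha t^{r})$, whence
\[
\frac{\psi(t)}{|\psi'(t)|}=\frac{t^{1-r}}{\alpha r},
\]
which, since $1-r>0$, increases to $\infty$ as $t\to\infty$; this is exactly the required hypothesis.

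For the second part, I would compute $\eta$ explicitly. Solving $\psi(\eta)=\psi(t)/2$ gives $\alpha\eta^{r}=\alpha t^{r}+\ln 2$, i.e. $\eta(t)=\bigl(t^{r}+\tfrac{\ln 2}{\alpha}\bigr)^{1/r}$, and a first-order (mean value, or binomial) expansion yields
\[
\eta(t)-t=\Bigl(t^{r}+\tfrac{\ln 2}{\alpha}\Bigr)^{1/r}-t\asymp \frac{\ln 2}{\alpha r}\,t^{1-r},
\]
so that $\eta(n)-n\asymp n^{1-r}$. Substituting this into the conclusion of Theorem~\ref{theoremIntegral} gives
\[
\psi(n)(\eta(n)-n)^{1/s'}\asymp \exp(-\alpha n^{r})\,n^{(1-r)/s'},
\]
and hence $e^{\bot}_{2n-1}(L^{\alpha,r}_{\beta,1})_{s}\asymp e^{\bot}_{2n}(L^{\alpha,r}_{\beta,1})_{s}\asymp \exp(-\alpha n^{r})\,n^{(1-r)/s'}$.

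The last part, namely passing from the subsequences of even and odd indices (which together exhaust $\mathbb{N}$) to the estimate for every index $n$, is the step I expect to require the most care: one invokes the monotonicity of $m\mapsto e^{\bot}_{m}(\cdot)_{s}$ to sandwich $e^{\bot}_{n}$ between consecutive members of these two subsequences. This is where the precise relation between the running index and the argument of $\psi$ and $\eta$ must be tracked, and it is the only genuinely non-mechanical point; the hypothesis verification and the asymptotics of $\eta(n)-n$ are routine once the explicit form of $\psi_{r,\alpha}$ is inserted.
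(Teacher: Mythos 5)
Your hypothesis verification and asymptotics are correct and constitute exactly the (implicit) derivation the paper intends: Corollary~\ref{cor2} is stated without proof as a direct specialization of Theorem~\ref{theoremIntegral}, and your checks that $\psi_{r,\alpha}\in\mathfrak{M}''_{\infty}$, that $\psi_{r,\alpha}(t)/|\psi'_{r,\alpha}(t)|=t^{1-r}/(\alpha r)\uparrow\infty$ for $r\in(0,1)$, and that $\eta(t)-t=\bigl(t^{r}+\tfrac{\ln 2}{\alpha}\bigr)^{1/r}-t\asymp t^{1-r}$ are precisely what is needed to conclude
\[
e^{\bot}_{2n-1}(L^{\alpha,r}_{\beta,1})_{s}\asymp e^{\bot}_{2n}(L^{\alpha,r}_{\beta,1})_{s}\asymp \exp(-\alpha n^{r})\,n^{\frac{1-r}{s'}}.
\]

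However, the step you deferred --- passing from this to the literal claim ``$e^{\bot}_{n}\asymp\exp(-\alpha n^{r})n^{(1-r)/s'}$ for all $n$'' --- is not merely delicate; it fails, and no sandwiching argument can repair it. Monotonicity of $m\mapsto e^{\bot}_{m}$ transfers a two-sided estimate from the even and odd subsequences to all indices only when the comparison sequence is stable under index doubling, i.e.\ $f(n)\asymp f(2n)$; this holds for power-type $\psi$ but not here. What the theorem actually gives, rewritten in terms of the number $m$ of retained harmonics, is $e^{\bot}_{m}\asymp\exp\bigl(-\alpha(m/2)^{r}\bigr)m^{(1-r)/s'}$ (the shift between $(m+1)/2$ and $m/2$ in the odd case is harmless, since $((m+1)/2)^{r}-(m/2)^{r}\to 0$ for $r<1$), and this differs from the claimed right-hand side by the unbounded factor $\exp\bigl(\alpha(1-2^{-r})m^{r}\bigr)$. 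Indeed, read with the paper's definition of $\asymp$ (constants independent of $n$), the corollary is inconsistent with Theorem~\ref{theoremIntegral} itself: evaluating the corollary at the even index $2n$ would give $e^{\bot}_{2n}\asymp\exp(-\alpha 2^{r}n^{r})(2n)^{(1-r)/s'}$, while the theorem's lower bound gives $e^{\bot}_{2n}\geq C\exp(-\alpha n^{r})n^{(1-r)/s'}$, and the ratio of these two quantities is $\exp\bigl(\alpha(2^{r}-1)n^{r}\bigr)\to\infty$. So the statement can only be understood as your first three steps prove it --- with the argument of $\psi$ and $\eta$ equal to \emph{half} the number of harmonics, i.e.\ as a statement about $e^{\bot}_{2n-1}$ and $e^{\bot}_{2n}$ --- and your final step should be abandoned rather than completed: the exponential decay of $\psi_{r,\alpha}$, which is the defining feature of this paper's setting, is exactly what makes the usual ``free'' re-indexing (routine for Weyl--Nagy classes) impossible.
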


\section*{Acknowledgements} 
The author is supported by the Austrian Science Fund FWF
  project F5503 (part of the Special Research Program (SFB) 
``Quasi-Monte Carlo Methods: Theory and Applications'')

\vspace{10mm}

\end{document}